\documentclass[12pt]{article}
\usepackage{amsmath, amsthm, amssymb}
\usepackage{graphicx,psfrag,epsf}
\usepackage{enumerate}
\usepackage{natbib}
\usepackage{multirow}
\usepackage{url} 

\pdfminorversion=4
\newcommand{\blind}{1}
\newcommand{\cov}{\ensuremath{ {\rm cov}}}
\newcommand{\cor}{\ensuremath{ {\rm cor}}}
\newcommand{\var}{\ensuremath{ {\rm var}}}
\newcommand{\E}{\mathbb{E}}
\theoremstyle{plain} \newtheorem{prop}{{\sc Proposition}}

\addtolength{\oddsidemargin}{-.5in}%
\addtolength{\evensidemargin}{-.5in}%
\addtolength{\textwidth}{1in}%
\addtolength{\textheight}{-.3in}%
\addtolength{\topmargin}{-.8in}%

\begin{document}

\def\spacingset#1{\renewcommand{\baselinestretch}%
{#1}\small\normalsize} \spacingset{1}


\if1\blind
{
  \title{\bf Reallocating and Resampling: A Comparison for Inference}
  \author{Kari Lock Morgan 
  \hspace{.2cm}\\
    Department of Statistics, Pennsylvania State University\\
    }
  \maketitle
} \fi

\if0\blind
{
  \bigskip
  \bigskip
  \bigskip
  \begin{center}
    {\LARGE\bf Reallocating and Resampling: A Comparison for Inference}
\end{center}
  \medskip
} \fi

\bigskip
\begin{abstract}
Simulation-based inference plays a major role in modern statistics, and often employs either reallocating (as in a randomization test) or resampling (as in bootstrapping).  Reallocating mimics random allocation to treatment groups, while resampling mimics random sampling from a larger population; does it matter whether the simulation method matches the data collection method?  Moreover, do the results differ for testing versus estimation?  Here we answer these questions in a simple setting by exploring the distribution of a sample difference in means under a basic two group design and four different scenarios: true random allocation, true random sampling, reallocating, and resampling.  For testing a sharp null hypothesis, reallocating is superior in small samples, but reallocating and resampling are asymptotically equivalent.  For estimation, resampling is generally superior, unless the effect is truly additive.  Moreover, these results hold regardless of whether the data were collected by random sampling or random allocation.  
\end{abstract}

\noindent%
{\it Keywords:}  simulation-based inference, randomization-based inference, design-based inference,
bootstrapping, randomization test, permutation test
\vfill

\newpage
\spacingset{1.45} 

\section{Introduction}
\label{intro}

Simulation-based inference commonly takes two different forms: reallocating or resampling.  Reallocating mimics random allocation to treatment groups, while resampling mimics random sampling from a greater population.  Moreover, reallocating implicitly assumes a null hypothesis, while resampling does not.  Should the distinction for when to use one simulation method over the other be based on the original data collection method, the inferential goal (testing or estimation), or neither?  When, and to what extent, does it matter?  Through mathematical derivation, we find that resampling is better for interval estimation unless the treatment effect is truly additive, reallocating is better for testing in small samples, and the two are asymptotically equivalent for testing in large samples.  Also, quite surprisingly, and contrary to conventional wisdom, these results hold regardless of whether the data were obtained by random allocation or random sampling.

Although the phrase ``resampling methods" is often used for any simulation-based method, here we use ``resample" to mean take a random sample with replacement from the original sample, of the same sample size as the original sample, as in a nonparametric bootstrap \citep{efron79}.  We use ``reallocate" to refer to the reallocation of outcomes to treatment groups, as is done in the classic permutation or randomization test \citep{fisher35, edgington07}.  Reallocating is also commonly referred to as rerandomizing, permuting, shuffling, or scrambling.   Reallocating conditions on the fixed sample and samples without replacement, whereas resampling assumes a greater population and samples with replacement from the observed sample. 

Reallocating originated from the design and analysis of experiments as a method for testing, dating back to Fisher's historic text {\em The Design of Experiments} \citep{fisher35}.  It was originally proposed as a way to measure significance solely based on the random allocation of an experiment. However, only a year later, Fisher discusses reallocating in the context of random sampling, while the emphasis on testing remained \citep{fisher36}.   Thus it appears that Fisher viewed reallocating as applicable regardless of the type of randomness in data collection, but primarily as a method for testing.   Others \citep[Section 1.4]{tukey93, manly07} have since adapted reallocation for estimation by inverting a series of tests and creating an interval comprised of values not rejected.  

Resampling originated with Efron's landmark paper on bootstrapping \citep{efron79}, which addresses only data obtained via random sampling, and focuses almost entirely on estimation.  Efron and others have since gone on to apply resampling for testing \citep[Section 16.2]{efron94}, but the assumption of random sampling remains.

Advice regarding when either simulation method is appropriate differs.  While Fisher clearly regarded reallocating as applicable under both random allocation and random sampling, \citet{tukey88} states that reallocating is applicable only under random allocation.  \citet{tukey88} also says that resampling is applicable under either random sampling or allocation, but others advocate for reallocating as a way to bypass the assumption of random sampling that is often unrealistic for randomized experiments \citep{cotton73, bredenkamp80, ludbrook98}, suggesting that other methods may not be applicable without random sampling.  \citet{lafleur09} recommend reallocating for testing and resampling for estimation, and \citet{ludbrook95, ludbrook98} recommend reallocating for testing under random allocation, resampling for testing under random sampling, and resamping for estimation under either data collection method.  Other comparisons prove that resampling and reallocating are asymptotically equivalent for testing \citep{romano89}, reallocating is more powerful than resampling for testing \citep{donegani91, lafleur09}, and resampling performs poorly in small sample sizes \citep{donegani91, hall91, efron94}.  A good historical summary comparing the two methods is given in \citet[Section 1.3.5]{berry14}.

While reallocating originated for testing after random allocation and resampling originated for estimation after random sampling, both methods are widely used often irrespective of the data collection method or inferential goal.  Moreover, while authors are in general agreement about the use of reallocating for tests based on experiments and the use of resampling for estimation on random samples, there is discrepancy as to when (or whether) it matters, and whether the data collection method or the inferential goal should take priority if the two don't align.  To the best of our knowledge, this appears to be a void in the literature that deserves to be addressed.  Moreover, most of the existing comparisions are based on conceptual, rather than mathematical, foundations (\citet{romano89} and \citet{donegani91} are notable exceptions) and rigorous mathematical justifications are needed for determining not just whether reallocating or resampling is preferred in a given situation, but also the extent to which the choice matters.  

This paper addresses these questions by comparing the distribution of the common difference in means under different scenarios; true random sampling, true random allocation, resampling, and reallocating.  These are all examined under different contexts: testing a sharp null hypothesis of no difference (Section \ref{sharpnull}), estimation (Section \ref{intervals}), and testing a weaker null hypothesis of equal means (Section \ref{equalmeans}).  Section \ref{examples} illustrates with two examples, one with data generated via random allocation and one with data generated via random sampling. Section \ref{summary} summarizes the findings and concludes.

\section{General Framework}\label{framework}

This paper examines a quantitative outcome, $Y$, measured on $n$ units with binary group membership, ${\bf W} = (W_1, \dots, W_n)$ where {$W_i \in \{0, 1\}$.  To focus primarily on the distinction between random allocation and random sampling, we restrict our comparison to two simple, but widely used, settings: a completely randomized experiment and a simple stratified sample, both with two groups of fixed sample sizes; $n_1 \equiv \sum_{i=1}^n W_i$ and $n_0 \equiv \sum_{i=1}^n (1-W_i)$.  It is important to note that any results here are applicable only to these two simple designs. 

Many note that random allocation and random sampling lead to two fundamentally different modes of inference, given different names by different authors: experimental versus sampling inference \citep{kempthorne79}, randomization versus population inference \citep{ludbrook95}, finite sample versus super population inference \citep{imbens15}, and permutation versus population model \citep{berry14}.  The former, stemming from random allocation and addressing only the sample at hand, originated with \citet{fisher35, fisher36}.  The latter, stemming from random sampling and generalizing to a larger population, originated with \citet{neyman28}.  \citet{kempthorne79} argues that it is misleading to refer to both using the same single word ``inference".  Here we are explicit about this distinction in our notation.   

When random allocation is the source of randomness, we condition on the units in the sample, and are interested in what might have happened, had they been assigned a different treatment.   Here we follow the potential outcome notation of the Rubin Causal Model \citep{rubin74}, and let $Y_i(W_i)$ denote the $i^{th}$ unit's potential outcome under treatment assignment $W_i$.  For this notation to make sense, we assume the standard Stable Unit Treatment Value Assumption (SUTVA) \citep{rubin80}; that ${\bf W}$ has only two well-defined levels and that potential outcomes for unit $i$ depend only on $W_i$, and not on the treatment assignments of other units. We assume here that the potential outcomes are fixed, and only ${\bf W}$ is random.  Let ${\bf Y}(w) = (Y_1(w), \dots, Y_n(w))$ denote the vectors of potential outcomes under treatment $w$, and ${\bf Y}^{obs} = (Y_1^{obs}, \dots, Y_n^{obs})$ denote the vector of observed outcomes, where $Y_i^{obs} = Y_i(1)W_i  +  Y_i(0) (1-W_i).$

When random sampling is the source of randomness, we are now primarily interested in the underlying distributions for the population from which our sample was drawn.  We thus alter our notation slightly, although for comparison purposes attempt to keep it as similar as possible.  In this context, let $Y(w)$ be a random variable denoting outcomes from group $w$, and using the square bracket notation of [mean, variance], we have $Y_i(w) \sim \left[ \mu_w, \sigma_w^2 \right]$, for $w \in \{0, 1\}$.  As the analog to SUTVA, here we assume independence between units.  

Although the notational distinction here is framed in terms of random allocation versus random sampling, the distinction may be better viewed in terms of causality or generalizability.   In the population framework, we are asking a question inherently about generalizability:  (how much) do the population distributions differ between the groups?   In the potential outcomes framework, we are asking an inherently causal question: (how much) would outcomes change under the opposite treatment?  The potential outcomes framework is useful for causal inference in non-experimental settings as well \citep{rubin07}, although without random allocation confounding variables must be accounted for, for example with propensity score methods \citep{rosenbaum83}.  \citet{rosenbaum84} incorporated the propensity score with reallocating for testing causality in observational studies, but here for simplicity we restrict our causal conclusions to the experimental framework.

While these methods allow for any statistic, throughout we use the difference in means:
\begin{equation}\label{tauhat}
\hat{\tau} \equiv \overline{Y}^{obs}(1) - \overline{Y}^{obs}(0)  \equiv \frac{\sum_{i=1}^n Y_i^{obs} W_i}{n_1} - \frac{\sum_{i=1}^n Y_i^{obs} (1-W_i)}{n_0}.
\end{equation}

Because there are often several different methods for getting from the distribution of $\hat{\tau}$ to a p-value or confidence interval, we restrict our focus here to the distribution of $\hat{\tau}$.  For all methods, the distribution will be centered at $0$ for testing and $\hat{\tau}$ for estimation, assuming symmetry.  Also, although higher moments of $\hat{\tau}$ can certainly be important as discussed in Section~\ref{acs}, under all methods considered here $\hat{\tau}$ will be approximately normally distributed if $n_0$ and $n_1$ are large enough \citep{li17}.  For these reasons, we focus henceforth on $\var(\hat{\tau})$.  

As a crucial point, although the two modes of inference differ in notation and scope of inference, in both cases we work with the same statistic: $\hat{\tau}$.  Although random allocation or random sampling may yield different statistics (in part due to the issue of confounding with the latter), for a given data set $\hat{\tau}$ is calculated agnostic to the mode of inference.  Therefore, any differences in $\var(\hat{\tau})$ arise solely due to differences in data collection (random allocation versus random sampling), differences in simulation (reallocating versus resampling), or differences due to testing versus estimation.

\section{Testing a Sharp Null of No Difference}\label{sharpnull}

In this section we consider tests of Fisher's sharp null hypothesis \citep{fisher35} of no difference whatsoever.  Under the experimental framework this can be interpreted as no treatment effect for any unit, $H_0: {\bf Y}(1) = {\bf Y}(0)$, and under the sampling framework this can be interpreted as no difference in the distributions for the two groups, $H_0: Y(1) \sim Y(0)$, or equivalently, all outcomes come from the same distribution.   In either case, the sharp null implies that the outcomes are independent of ${\bf W}$.  A weaker null hypothesis of no mean difference will be considered in Section~\ref{equalmeans}.

In the subsections that follow, we derive $\var(\hat{\tau})$ under random allocation, random sampling, reallocating, and resampling; under the sharp null hypothesis in this section, and for estimation in Section~\ref{intervals}.  The variance of $\hat{\tau}$ is highlighted within each subsection as a proposition to help the reader distill the key results.  The goal of this paper is to synthesize otherwise disparate results, providing $\var(\hat{\tau})$ for each of the different scenarios in a cohesive and consistent format, and under as comparable as possible conditions, allowing us to determine the extent to which the choice of simulation method matters, the extent to which the simulation method should match the corresponding data collection method, and the extent to which the chosen simulation method should differ for testing and estimation.

\subsection{Random Allocation and a Sharp Null Hypothesis}\label{ratest}

 \begin{prop}
 Under random allocation to two groups of fixed sample sizes $n_0$ and $n_1$, and assuming the sharp null hypothesis $H_0: {\bf Y}(1) = {\bf Y}(0)$, 
 \begin{equation}
 \var \left( \hat{\tau} \right) = s^2 \left( \frac{1}{n_1} + \frac{1}{n_0} \right), \label{ranullvar}
 \end{equation}
 where 
$s^2 \equiv \frac{1}{n-1} \sum_{i=1}^n (Y_i^{obs} - \overline{Y}^{obs})^2$ is the sample variance of ${\bf Y}^{obs}$.
 \end{prop}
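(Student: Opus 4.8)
The plan is to exploit the fact that, under the sharp null $H_0: {\bf Y}(1) = {\bf Y}(0)$, each unit's observed outcome $Y_i^{obs}$ is a fixed constant that does not depend on its treatment assignment, so the only source of randomness in $\hat\tau$ is the allocation vector ${\bf W}$. Under random allocation to fixed group sizes, ${\bf W}$ is a uniformly random $0/1$ vector with exactly $n_1$ ones, which means the treated units form a simple random sample drawn \emph{without replacement} from the fixed finite population $\{Y_1^{obs}, \dots, Y_n^{obs}\}$.

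First I would rewrite $\hat\tau$ so that its randomness is carried by a single quantity. Collecting the coefficient of each $Y_i^{obs}$ in \eqref{tauhat} gives $\hat\tau = \big(\tfrac{1}{n_1}+\tfrac{1}{n_0}\big)\sum_{i=1}^n Y_i^{obs} W_i - \tfrac{1}{n_0}\sum_{i=1}^n Y_i^{obs}$, and since the grand total $\sum_i Y_i^{obs}$ is fixed, only the first term is random. Equivalently, the control-group mean is a deterministic function of the treated-group mean $\overline{Y}^{obs}(1)$, so $\hat\tau$ is affine in $\overline{Y}^{obs}(1)$ alone; a short calculation gives the multiplier as $n/n_0$, whence $\var(\hat\tau) = (n/n_0)^2\,\var\big(\overline{Y}^{obs}(1)\big)$.

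Next I would apply the classical finite-population variance formula for the mean of a simple random sample without replacement: for a sample of size $n_1$ from a population of size $n$ with variance $s^2$ (defined with divisor $n-1$, exactly as in the statement), $\var\big(\overline{Y}^{obs}(1)\big) = \tfrac{s^2}{n_1}\cdot\tfrac{n-n_1}{n} = \tfrac{s^2}{n_1}\cdot\tfrac{n_0}{n}$. Substituting and using $n=n_0+n_1$ collapses the algebra to $s^2 \cdot \tfrac{n}{n_0 n_1} = s^2\big(\tfrac{1}{n_1}+\tfrac{1}{n_0}\big)$. For a self-contained derivation one can instead compute the variance of $\sum_i Y_i^{obs} W_i$ directly from $\var(W_i)=\tfrac{n_1}{n}\big(1-\tfrac{n_1}{n}\big)$ and $\cov(W_i,W_j)=-\tfrac{1}{n-1}\tfrac{n_1}{n}\big(1-\tfrac{n_1}{n}\big)$ for $i\ne j$, arriving at the same expression.

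The main obstacle, and the conceptually interesting point, is that the clean final form is somewhat deceptive. The naive guess would treat the two group means as independent, contributing $\sigma^2/n_1$ and $\sigma^2/n_0$; but sampling without replacement introduces a finite-population correction, and the fixed grand total makes the two group means negatively correlated, so neither the independence nor the with-replacement intuition holds on its own. The real work is verifying that these two departures conspire precisely so that the population variance (divisor $n$) is promoted to the sample variance $s^2$ (divisor $n-1$) while the correction factors cancel, leaving the familiar two-sample form. I would therefore track the $n-1$ versus $n$ bookkeeping carefully, since that is exactly where the result's elegance resides.
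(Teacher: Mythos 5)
Your proposal is correct, and it rests on the same key ingredient as the paper's proof: under the sharp null the only randomness is ${\bf W}$, so the treated group is a simple random sample without replacement and $\var\bigl(\overline{Y}^{obs}(1)\bigr) = \tfrac{s^2}{n_1}\cdot\tfrac{n-n_1}{n}$, which the paper derives explicitly from the covariance matrix of ${\bf W}$ (exactly the $\var(W_i)$ and $\cov(W_i,W_j)$ computation you sketch as your self-contained fallback). Where you differ is in the assembly step: the paper writes $\var(\hat\tau) = \var(\overline{Y}_1) + \var(\overline{Y}_0) - 2\cov(\overline{Y}_1,\overline{Y}_0)$ and uses the fixed grand total to argue $\cor(\overline{Y}_1,\overline{Y}_0) = -1$, whereas you use the same fixed-total observation to collapse $\hat\tau$ to an affine function of $\overline{Y}^{obs}(1)$ alone, with multiplier $n/n_0$, so only one variance ever needs to be computed. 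The two are mathematically equivalent (perfect negative correlation is precisely the statement that $\overline{Y}^{obs}(0)$ is a deterministic affine function of $\overline{Y}^{obs}(1)$), but your version is slightly cleaner in that it avoids handling a covariance term and the attendant sign bookkeeping; the paper's version has the advantage of exhibiting the three-term decomposition that recurs in its later comparison with the resampling case, where the two group means really are independent. Your closing remark about the finite-population correction and the negative correlation ``conspiring'' to promote the divisor-$n$ population variance to the divisor-$(n-1)$ sample variance $s^2$ is accurate and is the same point the paper flags when it notes that $(n-n_w)/n$ is ``almost'' the familiar correction factor.
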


\begin{proof}
Define $p \equiv P(W_i = 1) = n_1/n$, and we derive the covariance of $\bf{W}$ as follows: 
\begin{align*}
\text{ If }i=j :   \cov(W_i,W_j) & =\var(W_i) = p(1-p) \\ 
\text{ If }i \neq j  : \cov (W_i,W_j)&=\E(W_i W_j)-\E(W_i)\E(W_j)  =\frac{-p(1-p)}{n-1},  
\end{align*}
so
\begin{equation} \label{covW}
\cov({\bf W}) = \frac{p(1-p)}{n-1} \left( n{\bf I}_{n \times n} - {\bf 1}_{n\times n}\right),
\end{equation}
where ${\bf 1}_{n\times n}$ is the $n \times n$ matrix of all 1s, and ${\bf I}_{n \times n}$ is the $n \times n$ identity matrix. 

Define 
\begin{equation}\label{swstar}
{s^*_w}^2 \equiv \frac{ \sum_{i=1}^n \left(Y_i(w) - \overline{Y}(w) \right)^2}{n - 1},
\end{equation}
to be the sample variance for ${\bf Y}(w)$ for all units in the sample (regardless of actual treatment assignment).  Then, recalling ${\bf Y}(w)$ is fixed and only ${\bf W}$ is random, for $w \in \{0, 1\}$,
\begin{align}
\var \left( {\bf Y}(w)'{\bf W} \right) &= {\bf Y}(w)' \cov({\bf W}) {\bf Y}(w) \notag \\
&=  \frac{np(1-p)}{n-1} \left( \sum_{i=1}^n (Y_i(w) - \overline{Y}(w))^2 \right) \notag \\
&=  {s_w^*}^2 np(1-p).
\end{align}

Likewise, $\var \left( {\bf Y}(w)' ({\bf 1}_n - {\bf W}) \right) = {s_w^*}^2 np(1-p)$.  So for $w \in \{0, 1\}$,
\begin{equation}
\var \left(\overline{Y}^{obs}(w) \right) = \frac{{s_w^*}^2 np(1-p)}{n_w^2} = \frac{{s_w^*}^2}{n_w} \left( \frac{n - n_w}{n} \right).
\end{equation}

Note that $(n - n_w)/n$ is almost the familiar finite population correction factor, except with an $n$ rather than $n-1$ in the denominator.  This is because we defined ${s^*_w}^2$ with an $n-1$ in the denominator, despite the calculation being over the entire finite ``population".  

Under the sharp null hypothesis, ${\bf Y}(0) = {\bf Y}(1) = {\bf Y}^{obs}$, so ${s^*_1}^2 = {s^*_0}^2 = s^2$  and 
 \begin{equation}\label{rameanvar}
 \var \left(\overline{Y}^{obs}(w) \mid {\bf Y}(1)  = {\bf Y}(0) \right) = \frac{s^2}{n_w} \left( \frac{n - n_w}{n} \right).
 \end{equation}

Also under the sharp null, $\sum_{i=1}^n Y_i^{obs} W_i + \sum_{i=1}^n Y_i^{obs} (1-W_i) = \sum_{i=1}^n Y_i^{obs}$, so
\begin{equation}
\cor \left( \overline{Y}_1, \overline{Y}_0  \mid {\bf Y}(1) = {\bf Y}(0)\right) = -1.
\end{equation}
Therefore, under random allocation 
\begin{align*}
\var \left( \hat{\tau}   \mid {\bf Y}(1) = {\bf Y}(0)\right) &= \var \left( \overline{Y}_1 \right) + \var \left( \overline{Y}_0 \right) - 2 \cov \left( \overline{Y}_1, \overline{Y}_0 \right) \notag \\
&= \frac{s^2(1-p)}{np} + \frac{s^2 p}{n(1-p)} + 2 \frac{s^2}{n} \left( \frac{(1-p)p}{p(1-p)} \right)^{1/2} \notag \\
&= s^2 \left( \frac{1}{n_1} + \frac{1}{n_0} \right). 
\end{align*}
\end{proof}


Unlike most true variance calculations that rely on unknown quantities, this can actually be calculated (not just estimated) from the observed data. This differs from the typical formula found in most introductory statistics textbooks because the sharp null hypothesis assumes a common variance between treatment groups.  This also differs from the usual pooled variance estimate, as $s^2$ is calculated assuming both equal variances and equal means between the groups, so considers deviations of each data point from the grand mean, as opposed to deviations of each data point from its own group mean.

 
 \subsection{Reallocating for Testing a Sharp Null Hypothesis}\label{reallocatetest}
 
Under the sharp null hypothesis, ${\bf Y}^{obs} = {\bf Y}(1) = {\bf Y}(0)$ is the same for every treatment assignment ${\bf W}$.  Thus we can empirically create the distribution of $\hat{\tau}$ under the null by leaving ${\bf Y}^{obs}$ untouched and simulating many random allocations, ${\bf W}$, and calculating $\hat{\tau}$ for each simulated ${\bf W}$.  Once a null distribution is generated in this fashion, the p-value is calculated as the proportion of simulated statistics as extreme as, or more extreme than, the observed difference in means \citep{fisher35, pitman37}.   Note that in a randomization test the variance of $\hat{\tau}$ is not used or needed, as the p-value is simply computed empirically as a proportion of simulated statistics, but we give it here for comparison purposes.  

If all random allocations were to be enumerated (exact method) the null distribution achieved through reallocation in this fashion is exactly the true null distribution under random allocation, and thus the p-value achieved in this fashion is widely recognized as the gold standard correct p-value \citep{fisher36, kempthorne52, bradley68, tukey88, edgington07}.  However, in most realistic cases, enumerating and calculating the statistic for all ${n \choose n_1}$ allocations is not feasible, so instead we simulate a large number of reallocations (Monte Carlo method).  In this case, the randomization distribution will converge to the true null distribution as the number of simulations increases.   

 \begin{prop}\label{randallocatetest}
 Under reallocation to two groups of fixed sizes $n_0$ and $n_1$, and assuming the sharp null hypothesis $H_0: {\bf Y}(1) = {\bf Y}(0)$, as the number of simulations increases,
 \begin{equation}
 \var \left( \hat{\tau} \right) \rightarrow s^2 \left( \frac{1}{n_1} + \frac{1}{n_0} \right). \label{reallocatenullvar}
 \end{equation}
 \end{prop}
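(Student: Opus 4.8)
The plan is to reduce this statement to Proposition~1 (equation~\eqref{ranullvar}) together with a law-of-large-numbers argument. The central observation, already foreshadowed in the preceding discussion, is that under the sharp null hypothesis reallocation and true random allocation generate \emph{exactly} the same distribution for $\hat{\tau}$; the only gap between \eqref{reallocatenullvar} and \eqref{ranullvar} is the replacement of the full enumeration by its Monte Carlo approximation. So the proof splits into two pieces: an exact distributional identity, and a convergence statement.

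First I would establish the distributional identity. Under $H_0: {\bf Y}(1) = {\bf Y}(0)$ we have $Y_i^{obs} = Y_i(1)W_i + Y_i(0)(1-W_i) = Y_i(1) = Y_i(0)$ for every unit, independently of the value of $W_i$; hence the observed outcome vector ${\bf Y}^{obs}$ is invariant under any relabeling of the treatment assignment. A reallocation draws a fresh assignment ${\bf W}^*$ uniformly from the ${n \choose n_1}$ vectors containing $n_1$ ones and $n_0$ zeros, which is precisely the distribution of ${\bf W}$ under the random-allocation mechanism of Section~\ref{ratest}. Because $\hat{\tau}$ in \eqref{tauhat} is computed from the fixed ${\bf Y}^{obs}$ and the randomly drawn assignment, the reallocation statistic $\hat{\tau}({\bf W}^*)$ therefore has exactly the same distribution as $\hat{\tau}$ under true random allocation with the sharp null imposed.

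With the identity in hand the conclusion follows quickly. If all ${n \choose n_1}$ allocations were enumerated, the resulting exact reallocation distribution would be identical to the true null distribution of Proposition~1, whose variance is $s^2 (1/n_1 + 1/n_0)$. In the Monte Carlo implementation the simulated statistics $\hat{\tau}^{(1)}, \dots, \hat{\tau}^{(B)}$ are i.i.d.\ draws from this exact distribution, so by the law of large numbers both the empirical mean and the empirical second moment converge to their population counterparts, and hence the empirical variance converges to $s^2 (1/n_1 + 1/n_0)$ as $B \to \infty$.

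The main obstacle is conceptual rather than computational: one must separate the two layers of randomness and articulate precisely why reallocation reproduces the allocation distribution. The only nontrivial content is the sharp-null invariance of ${\bf Y}^{obs}$, which makes relabeling outcome-preserving; once that is stated, Proposition~1 supplies the variance and the Monte Carlo convergence is a routine application of the law of large numbers. No fresh variance computation is needed, which is precisely why the result is phrased as a limit rather than an equality.
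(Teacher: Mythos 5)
Your proposal is correct and follows essentially the same route as the paper: the paper's proof also reduces the claim to Proposition~1 by noting that under the sharp null the reallocation distribution of $\hat{\tau}$ coincides exactly with the true random-allocation null distribution (exactly under full enumeration, in the limit under Monte Carlo), so the variance is that of \eqref{ranullvar}. Your version simply makes explicit the sharp-null invariance of ${\bf Y}^{obs}$ and the law-of-large-numbers step that the paper leaves implicit.
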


\begin{proof}
As discussed, under the sharp null hypothesis, the distribution of the statistic under reallocation will equal the true null distribution under random allocation, either exactly (exact method) or in the limit (monte carlo method).  Therefore, $\var(\hat{\tau})$ is as in \eqref{ranullvar}.
\end{proof}


 
 \subsection{Random Sampling and a Sharp Null Hypothesis}\label{rstest}

\begin{prop}
 Under random sampling of two groups of fixed sample sizes $n_0$ and $n_1$, and assuming the sharp null hypothesis $H_0: Y(0) \sim Y(1)$, 
 \begin{equation}
 \var \left( \hat{\tau} \right) = \sigma^2 \left( \frac{1}{n_1} + \frac{1}{n_0} \right), \label{rsnullvar}
 \end{equation}
 where $\sigma^2$ is the population variance.
 \end{prop}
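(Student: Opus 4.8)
The plan is to compute $\var(\hat\tau)$ directly from the independence structure that random sampling provides, in close parallel to the random-allocation proof but with one crucial simplification. Under random sampling the analog to SUTVA assumed in Section~\ref{framework} is independence between units, so the $n_1$ observations assigned to group~$1$ and the $n_0$ observations assigned to group~$0$ are all mutually independent draws. The sharp null $H_0: Y(0) \sim Y(1)$ forces both groups to share a common distribution, so in the notation $Y_i(w) \sim [\mu_w, \sigma_w^2]$ we have $\sigma_0^2 = \sigma_1^2 = \sigma^2$.

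First I would compute the variance of each group mean. Since $\overline{Y}^{obs}(w)$ is an average of $n_w$ i.i.d.\ observations each with variance $\sigma^2$, the standard i.i.d.\ calculation gives
\begin{equation*}
\var\left(\overline{Y}^{obs}(w)\right) = \frac{\sigma^2}{n_w}, \qquad w \in \{0,1\}.
\end{equation*}
Note that, unlike the random-allocation expression \eqref{rameanvar}, there is no finite-population correction factor, because the superpopulation model treats the $Y_i(w)$ as i.i.d.\ draws rather than as a partition of a fixed finite set.

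Next I would handle the cross term. Because the group-$1$ and group-$0$ observations involve disjoint, independently sampled units, $\overline{Y}^{obs}(1)$ and $\overline{Y}^{obs}(0)$ are independent, so their covariance is zero. Combining,
\begin{align*}
\var(\hat\tau) &= \var\left(\overline{Y}^{obs}(1)\right) + \var\left(\overline{Y}^{obs}(0)\right) - 2\cov\left(\overline{Y}^{obs}(1), \overline{Y}^{obs}(0)\right) \\
&= \frac{\sigma^2}{n_1} + \frac{\sigma^2}{n_0} = \sigma^2\left(\frac{1}{n_1} + \frac{1}{n_0}\right),
\end{align*}
as claimed.

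The calculation itself is routine; the only conceptually delicate point, and the one worth emphasizing, is the behavior of the covariance term. Under random allocation the two group means were perfectly negatively correlated ($\cor = -1$), since the outcomes were a fixed set being partitioned; here, under random sampling, they are exactly uncorrelated. The main obstacle is therefore not technical but interpretive: ensuring that the independence-between-units assumption is invoked correctly to justify the vanishing covariance, and recognizing that despite the opposite correlation structure the resulting variance has the identical functional form as \eqref{ranullvar}, with the population variance $\sigma^2$ simply replacing the sample variance $s^2$.
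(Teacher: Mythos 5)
Your proof is correct and follows essentially the same route as the paper: the paper simply invokes the ``familiar formula'' $\var(\hat\tau) = \sigma_1^2/n_1 + \sigma_0^2/n_0$ from independence and then sets $\sigma_1 = \sigma_0 = \sigma$ under the sharp null, whereas you spell out the i.i.d.\ variance of each group mean and the vanishing covariance explicitly. Your added remark contrasting the zero covariance here with the perfect negative correlation under random allocation is a nice observation but does not change the argument.
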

 
 \begin{proof}
Under fixed sample sizes, simple random sampling, and independence, it is straightforward to derive the familiar formula: 
\begin{equation}\label{rsvar}
\var \left( \hat{\tau} \right) = \frac{\sigma_1^2}{n_1} + \frac{\sigma_0^2}{n_0},
\end{equation}
where $\sigma_w^2$ denotes the population variance within group $w$.  

Under the sharp null hypothesis, $\sigma_1 = \sigma_0 = \sigma$, yielding
\begin{equation*}
\var \left(\hat{\tau}  \mid Y(1) \sim Y(0)\right) = \sigma^2 \left( \frac{1}{n_1} + \frac{1}{n_0}\right).
\end{equation*}
\end{proof}

Therefore, under the sharp null hypothesis, the distributions of $\hat{\tau}$ under random allocation or random sampling are almost equivalent, only differing by $s^2$ versus $\sigma^2$ in $\var(\hat{\tau})$.


\subsection{Resampling for Testing a Sharp Null Hypothesis}\label{resampletest}

To resample under the sharp null hypothesis, select samples of size $n_1$ and $n_0$, each sampled with replacement from the combined sample of all $n$ units.  Repeating this process many times and calculating $\hat{\tau}$ for each yields a distribution of $\hat{\tau}$ under the sharp null, and as with reallocating, a p-value can then be calculated as the proportion of the simulated statistics that are as extreme as (or more extreme than) than the observed statistic.  There are other ways of obtaining a p-value from this distribution \citep{hall91}, but for comparison we focus here on $\var(\hat{\tau})$ under the sharp null.

 \begin{prop}
 Under resampling of two groups of fixed sample sizes $n_0$ and $n_1$, and assuming the sharp null hypothesis $H_0: Y(0) \sim Y(1) $, as the number of simulations increases,
 \begin{equation}
 \var \left( \hat{\tau} \right) \rightarrow s^2  \left( \frac{1}{n_1} + \frac{1}{n_0} \right) \left( \frac{n-1}{n} \right). \label{resnullvar}
 \end{equation}
\end{prop}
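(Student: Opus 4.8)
The plan is to exploit the fact that, conditional on the observed data, resampling under the sharp null generates each resampled outcome as an i.i.d.\ draw from the empirical distribution of the pooled sample ${\bf Y}^{obs}$, which places mass $1/n$ on each observed value. First I would record the mean and variance of a single such draw $Y^*$. A direct computation gives $\E(Y^*) = \overline{Y}^{obs}$ and
\[
\var(Y^*) = \frac{1}{n}\sum_{i=1}^n \left(Y_i^{obs} - \overline{Y}^{obs}\right)^2 = \frac{n-1}{n}\, s^2,
\]
the crucial point being that this empirical variance carries a factor $1/n$ rather than the $1/(n-1)$ appearing in $s^2$; this is precisely the source of the $(n-1)/n$ correction in \eqref{resnullvar}.

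Next, because the $n_1$ draws forming the resampled group-$1$ mean $\overline{Y}_1$ are mutually independent (sampling is with replacement), $\var(\overline{Y}_1) = \frac{1}{n_1}\var(Y^*) = \frac{n-1}{n}\frac{s^2}{n_1}$, and likewise $\var(\overline{Y}_0) = \frac{n-1}{n}\frac{s^2}{n_0}$. Since the two groups are drawn by separate, independent resampling operations from the common pool, the draws are i.i.d.\ across groups and so $\cov(\overline{Y}_1, \overline{Y}_0) = 0$ --- in contrast to the reallocating case, where sampling without replacement from a fixed partition forced a correlation of $-1$. Combining these yields
\[
\var(\hat{\tau}) = \var(\overline{Y}_1) + \var(\overline{Y}_0) = \frac{n-1}{n}\, s^2\left(\frac{1}{n_1} + \frac{1}{n_0}\right),
\]
the exact variance of $\hat{\tau}$ under the resampling distribution.

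Finally, I would close the gap between this conditional variance and the statement's limiting claim by invoking the law of large numbers for the Monte Carlo procedure: as the number of simulated resamples grows, the empirical variance of the simulated $\hat{\tau}$ values converges to the conditional variance computed above, giving \eqref{resnullvar}. I do not expect any single step to be a genuine obstacle; the one point requiring care is the variance of a single empirical draw, where it is tempting to write $s^2$ rather than $\frac{n-1}{n}s^2$. Making this factor explicit, together with the observation that independence (rather than the $-1$ correlation of reallocating) governs the cross term, accounts for the entire discrepancy between \eqref{resnullvar} and the reallocating result \eqref{reallocatenullvar}.
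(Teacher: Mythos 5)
Your proposal is correct and follows essentially the same route as the paper: the paper cites Efron and Tibshirani for the per-group result $\var(\overline{Y}^{obs}(w)) \rightarrow \frac{s^2}{n_w}\left(\frac{n-1}{n}\right)$ and then invokes independence of the two resampled groups, exactly as you do. The only difference is that you derive the cited lemma from first principles (the variance of a single draw from the pooled empirical distribution being $\frac{n-1}{n}s^2$), which makes the argument self-contained and correctly isolates the source of the $(n-1)/n$ factor.
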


\begin{proof}
As derived by \citet[pg. 43]{efron94},  
\begin{equation}
\var \left( \overline{Y}^{obs}(w) \mid Y(1) \sim Y(0) \right) \rightarrow \frac{s^2}{n_w} \left( \frac{n-1}{n} \right). \label{bootmeanvar}
\end{equation}
Because the resampled groups will be independent (due to sampling with replacement),
\begin{equation}
\var \left( \hat{\tau} \mid Y(1) \sim Y(0) \right) \rightarrow s^2  \left( \frac{1}{n_1} + \frac{1}{n_0} \right) \left( \frac{n-1}{n} \right). \notag
\end{equation}
\end{proof}

  Thus resampling gives a variance estimate that is biased in small samples.  The variance will on average be slightly lower than the variance under random allocation, random sampling, or reallocating.  However, the bias diminishes as $n$ gets large, and the estimate is consistent under either data collection method.  Moreover, the variances under resampling, \eqref{resnullvar}, or reallocating, \eqref{reallocatenullvar}, differ only by $(n-1)/n$, and hence are asymptotically equivalent.
  
  Note that reallocating and resampling are actually quite similar, with only one notable difference.  In either case we take a sample from ${\bf Y}^{obs}$, set $W_i = 1$ for $i = 1, \dots, n_1$ and $W_i = 0$ for $i = n_1 + 1, \dots, n$, calculate $\hat{\tau}$, and repeat many times. The only difference is whether this sample is taken with (resampling) or without (reallocating) replacement.  This perspective can also help explain the extra factor of $(n-n_1)/n$ in \eqref{rameanvar} as the adjustment for sampling without replacement.  \citet{donegani91} shows that asymptotically, sampling without replacement is better than sampling with replacement from a finite population, in the sense that means will generally be closer to the overall mean.  
 
 \section{Interval Estimation}\label{intervals}
This section mimics Section~\ref{sharpnull},  but in the context of estimation, without a null hypothesis.      

We first define the estimands, which differ depending on the mode of inference.  Assuming we are interested in the true average difference between groups, the distinction becomes whether this average difference is over all units in the sample, if both potential outcomes could be observed, or over all units in the population, if we could collect data on the entire population.  When the only source of randomness is random allocation, we take our estimand to be the true average treatment effect for the sample:
\begin{equation}\label{taufs}
\tau_{ra} = \frac{1}{n} \sum_{i=1}^n \left[Y_i(1) - Y_i(0) \right].
\end{equation}
Under random sampling from a greater population, our estimand can be defined as 
\begin{equation}\label{spestimand}
\tau_{rs} = \E \left[ Y(1) - Y(0) \right] = \mu_1 - \mu_2.
\end{equation}

Although it does not change the resulting confidence interval, which are based only on the statistic $\hat{\tau}$, note that the distinction between $\tau_{ra}$ and $\tau_{rs}$ is not just finite sample versus population inference, but again whether the inferences are inherently causal in nature.  

\subsection{Random Allocation without a Null Hypothesis}

\begin{prop}
 Under random allocation to two groups of fixed sample sizes $n_0$ and $n_1$, 
 \begin{equation}
\var \left( \hat{\tau}  \right) = \frac{{s^*_1}^2}{n_1} + \frac{{s^*_0}^2}{n_0} - \frac{{s^*_{1-0}}^2}{n}, \label{ravar}
 \end{equation}
 where ${s^*_w}^2$ is defined in \eqref{swstar} and \begin{equation}\label{s10}
{s^*_{1-0}}^2 \equiv \frac{\sum_{i=1}^n \left( Y_i (1) - Y_i(0) - (\overline{Y}(1) - \overline{Y}(0)) \right)^2}{n-1}
\end{equation}
is the sample variance of the unit-level treatment effects.  
 \end{prop}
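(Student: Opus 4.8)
The plan is to decompose
\begin{equation*}
\var(\hat{\tau}) = \var\left(\overline{Y}^{obs}(1)\right) + \var\left(\overline{Y}^{obs}(0)\right) - 2\cov\left(\overline{Y}^{obs}(1), \overline{Y}^{obs}(0)\right),
\end{equation*}
and reuse the work already done in Section~\ref{ratest}. The two variance terms require no new effort: the derivation leading to \eqref{rameanvar} never invoked the sharp null, so $\var(\overline{Y}^{obs}(w)) = \frac{{s^*_w}^2}{n_w}\left(\frac{n - n_w}{n}\right)$ holds verbatim for each $w$. All of the genuinely new work is therefore concentrated in the covariance term, which under the null collapsed to a correlation of $-1$ but no longer does so once $Y_i(1)$ and $Y_i(0)$ are allowed to differ.

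To handle the covariance, I would write $\overline{Y}^{obs}(1) = \frac{1}{n_1}{\bf Y}(1)'{\bf W}$ and $\overline{Y}^{obs}(0) = \frac{1}{n_0}{\bf Y}(0)'({\bf 1}_n - {\bf W})$, and expand
\begin{equation*}
\cov\left({\bf Y}(1)'{\bf W}, {\bf Y}(0)'({\bf 1}_n - {\bf W})\right) = -\cov\left({\bf Y}(1)'{\bf W}, {\bf Y}(0)'{\bf W}\right),
\end{equation*}
the other piece dropping out because ${\bf Y}(0)'{\bf 1}_n$ is a fixed constant. The surviving term is the bilinear form ${\bf Y}(1)'\cov({\bf W}){\bf Y}(0)$, into which I would substitute $\cov({\bf W})$ from \eqref{covW}. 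The matrix $n{\bf I} - {\bf 1}_{n\times n}$ produces $n\sum_i Y_i(1)Y_i(0) - (\sum_i Y_i(1))(\sum_i Y_i(0))$, which is exactly $n(n-1)$ times the sample covariance $s^*_{10} \equiv \frac{1}{n-1}\sum_{i=1}^n (Y_i(1) - \overline{Y}(1))(Y_i(0) - \overline{Y}(0))$ of the two potential-outcome vectors. After dividing by $n_1 n_0$ and using $p = n_1/n$, the covariance collapses cleanly to $\cov(\overline{Y}^{obs}(1), \overline{Y}^{obs}(0)) = -s^*_{10}/n$.

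The final step is bookkeeping. Substituting the two variance terms together with $-2(-s^*_{10}/n)$ gives $\var(\hat{\tau}) = \frac{{s^*_1}^2 n_0}{n_1 n} + \frac{{s^*_0}^2 n_1}{n_0 n} + \frac{2 s^*_{10}}{n}$, and the one identity that ties this to the stated form is the expansion ${s^*_{1-0}}^2 = {s^*_1}^2 + {s^*_0}^2 - 2 s^*_{10}$, obtained by writing $Y_i(1) - Y_i(0) - (\overline{Y}(1) - \overline{Y}(0)) = (Y_i(1) - \overline{Y}(1)) - (Y_i(0) - \overline{Y}(0))$ and squaring termwise in \eqref{s10}. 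Solving this for $2 s^*_{10}$ and substituting recovers \eqref{ravar}. I expect the main obstacle to be conceptual rather than computational: recognizing that the covariance no longer reduces to a perfect negative correlation, and correctly isolating $s^*_{10}$ through the bilinear form in $\cov({\bf W})$ rather than appealing to the ``sums to a constant'' argument that worked under the null. A worthwhile remark, not needed for the proof itself, is that $s^*_{10}$, and hence ${s^*_{1-0}}^2$, is never identifiable from data since $Y_i(1)$ and $Y_i(0)$ are never jointly observed; this is the fundamental problem of causal inference and is exactly why \eqref{ravar}, unlike \eqref{ranullvar}, cannot be computed from the observed sample.
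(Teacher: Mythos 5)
Your proposal is correct. Note that the paper does not actually prove this proposition --- it simply cites Theorem 6.2 of \citet{imbens15} --- so what you have written is a self-contained derivation filling in that citation, and it does so efficiently by reusing the machinery the paper already built in Section~\ref{ratest}. Each step checks out: the unconditional formula $\var(\overline{Y}^{obs}(w)) = \frac{{s^*_w}^2}{n_w}\cdot\frac{n-n_w}{n}$ is indeed established before the sharp null is invoked (it is the display just preceding \eqref{rameanvar}, where the null only enters afterward to set ${s^*_1}^2={s^*_0}^2=s^2$); the bilinear form ${\bf Y}(1)'\cov({\bf W}){\bf Y}(0)$ with $\cov({\bf W})$ from \eqref{covW} gives $\frac{p(1-p)}{n-1}\bigl(n\sum_i Y_i(1)Y_i(0) - \sum_i Y_i(1)\sum_i Y_i(0)\bigr) = np(1-p)\,s^*_{10}$, so $\cov(\overline{Y}^{obs}(1),\overline{Y}^{obs}(0)) = -s^*_{10}/n$ after dividing by $n_1 n_0 = n^2 p(1-p)$; and the identity ${s^*_{1-0}}^2 = {s^*_1}^2 + {s^*_0}^2 - 2s^*_{10}$ together with $\frac{n_0}{n_1 n} + \frac{1}{n} = \frac{1}{n_1}$ collapses everything to \eqref{ravar}. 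Your approach is essentially the same as the standard derivation in the cited source, and your correct identification of the covariance term as the place where the sharp-null argument (perfect negative correlation from a fixed total) breaks down is exactly the right conceptual point. The closing remark about the non-identifiability of $s^*_{10}$ matches the paper's own discussion following the proposition.
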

 
This result is derived in \citet[Theorem 6.2]{imbens15}.

We make three important observations regarding \eqref{s10} before proceeding.  The first is that ${s^*_{1-0}}^2$ is always positive, so ignoring this third term in \eqref{ravar}, as is typically done, will provide a conservative (overestimate) of the true randomization variance.  The second is that ${s^*_{1-0}}^2$ can never be directly calculated, and usually cannot even be estimated, because we never observe both potential outcomes for the same unit.  Neyman \citep[translated from the 1923 Polish version]{neyman90} recognized both facts, and so chose to define confidence intervals as having {\em at least} the specified level of coverage.  Lastly,  if the treatment effect is additive, ${\bf Y}(1) = {\bf Y}(0) + a$, then ${s^*_{1-0}}^2 = 0$ and 
 \begin{equation} \label{additive}
\var \left(\hat{\tau}  \mid {\bf Y}(1) = {\bf Y}(0) + a \right) = \frac{{s^*_1}^2}{n_1} + \frac{{s^*_0}^2}{n_0} 
= {s^*_1}^2 \left(\frac{1}{n_1} + \frac{1}{n_0} \right) 
= {s^*_0}^2 \left(\frac{1}{n_1} + \frac{1}{n_0} \right).
\end{equation}

Note that in general even if  ${s^*_1}^2 = {s^*_0}^2$, they will not equal $s^2$ (except in special cases), because $s^2$ is calculated via taking deviations from the grand mean, whereas each ${s^*_w}^2$ is calculated taking deviations from its group mean.   The sharp null hypothesis is a special case of additivity, in which ${s^*_1}^2 = {s^*_0}^2 = s^2$ because the group means are assumed to be the same, and in this case \eqref{additive} yields an alternative derivation of \eqref{ranullvar}.

 \subsection{Reallocation for Interval Estimation}\label{reallocateint}
 
 Reallocating requires an assumed relationship between $Y_i(1)$ and $Y_i(0)$ such that regardless of which treatment is observed for each unit, both potential outcomes can be assumed known.  The most commonly assumed relationship is that of an additive treatment effect: ${\bf Y}(1) = {\bf Y}(0) + a$.  \citet{tukey93} acknowledges that with reallocating ``we will be pairing platinum standard significance tests with silver or pewter standard confidence intervals" due to the necessity of this additivity assumption, stating that ``Such rigid displacement is unlikely to happen in practice. It is a less than completely verifiable assumption."  Moreover, unlike most of the results from this paper, this idea does not naturally extend to discrete data, as it would require adding fractional values, potentially resulting in nonsensical values.  

However, under additivity, reallocating can be used to generate a confidence interval as the values of $a$ that would not be rejected under a test of the null hypothesis $H_0: {\bf Y}(1) = {\bf Y}(0) +a$ \citep[Section 1.4]{pitman37, manly07}. This can be quite computationally intensive; \citet{garthwaite96} provides an algorithm for doing this more efficiently.  

 \begin{prop}
 Under reallocation to two groups of fixed sample sizes $n_0$ and $n_1$, and assuming $H_0: {\bf Y}(1) = {\bf Y}(0) + a$, as the number of simulations increases, 
  \begin{equation}
 \var \left( \hat{\tau} \right) \rightarrow \left[ s^2  + \frac{n_0 n_1 a}{n (n-1)} \left(a - 2\hat{\tau} \right) \right]  \left( \frac{1}{n_1} + \frac{1}{n_0} \right). \label{savar}
 \end{equation}
\end{prop}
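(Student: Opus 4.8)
The plan is to reduce reallocation under the additive null to the random-allocation variance formula \eqref{ravar} applied to imputed potential outcomes, and then to evaluate the resulting sample variance in terms of the observed data.

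First I would note that the assumption $H_0: {\bf Y}(1) = {\bf Y}(0) + a$ makes both potential outcomes computable from ${\bf Y}^{obs}$ and ${\bf W}$: setting $\tilde{Y}_i(1) = Y_i^{obs} + a(1 - W_i)$ and $\tilde{Y}_i(0) = Y_i^{obs} - a W_i$ recovers the observed outcome under the observed assignment and satisfies $\tilde{Y}_i(1) = \tilde{Y}_i(0) + a$ for every unit. Reallocating then amounts to drawing a fresh assignment ${\bf W}^*$ uniformly over the ${n \choose n_1}$ allocations and recomputing $\hat{\tau}$ from these fixed imputed outcomes, so as the number of simulations grows the reallocation variance converges to the exact random-allocation variance of $\hat{\tau}$ with potential outcomes $\tilde{{\bf Y}}(1), \tilde{{\bf Y}}(0)$. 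Because the imputed effect is exactly additive, the third term ${s^*_{1-0}}^2$ in \eqref{ravar} vanishes and the formula collapses to the additive form \eqref{additive}, giving $\var(\hat{\tau}) \rightarrow \tilde{s}_1^2 \left( \tfrac{1}{n_1} + \tfrac{1}{n_0}\right)$, where $\tilde{s}_1^2$ denotes ${s^*_1}^2$ computed from the imputed $\tilde{{\bf Y}}(1)$.

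The main work is to express $\tilde{s}_1^2$ through $s^2$ and $\hat{\tau}$. I would write each centered imputed value as $\tilde{Y}_i(1) - \overline{\tilde Y}(1) = d_i + c_i$ with $d_i = Y_i^{obs} - \overline{Y}^{obs}$ and $c_i = a\left[(1-W_i) - \tfrac{n_0}{n}\right]$, both of which sum to zero, and expand $\tilde{s}_1^2 = \tfrac{1}{n-1}\sum_i (d_i + c_i)^2$. The pure-$d$ term returns $s^2$; the term $\sum_i c_i^2$ evaluates to $a^2 n_0 n_1 / n$ using $(1-W_i)^2 = 1-W_i$; and the cross term reduces, after the $n_0/n$ offset drops out by $\sum_i d_i = 0$, to $a \sum_{i: W_i=0}(Y_i^{obs} - \overline{Y}^{obs})$, which I would rewrite as $-a\, n_0 n_1 \hat{\tau}/n$ via the identity $\overline{Y}^{obs}(0) - \overline{Y}^{obs} = -n_1 \hat{\tau}/n$. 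Collecting these gives $\tilde{s}_1^2 = s^2 + \frac{n_0 n_1 a}{n(n-1)}(a - 2\hat{\tau})$, and multiplying by $\left(\tfrac{1}{n_1} + \tfrac{1}{n_0}\right)$ yields \eqref{savar}.

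I expect the only real obstacle to be the bookkeeping in the cross term: recognizing that the shifted mass placed on the control units is precisely what produces the dependence on $\hat{\tau}$, and keeping the signs straight in $\sum_{i: W_i=0}(Y_i^{obs} - \overline{Y}^{obs}) = -n_0 n_1 \hat{\tau}/n$. Everything else is a direct appeal to \eqref{ravar} and \eqref{additive} together with routine algebra. An equally valid route is to compute $\tilde{s}_0^2$ from $\tilde{{\bf Y}}(0)$ instead; since $\tilde{{\bf Y}}(0)$ is merely $\tilde{{\bf Y}}(1)$ shifted by the constant $a$, it must return the same value, which serves as a useful consistency check.
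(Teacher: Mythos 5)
Your proposal is correct and follows essentially the same route as the paper: impute both potential outcomes from ${\bf Y}^{obs}$ under the additive null, invoke \eqref{additive} so that the reallocation variance converges to the common within-group variance times $\left(\frac{1}{n_1}+\frac{1}{n_0}\right)$, and expand that variance around $\overline{Y}^{obs}+n_0 a/n$ to obtain $s^2+\frac{n_0 n_1 a}{n(n-1)}(a-2\hat{\tau})$; the paper merely organizes the algebra by ordering the units and splitting the sum into treated and control blocks rather than via your $d_i+c_i$ decomposition. One minor bookkeeping slip: the cross term is $2\sum_i d_i c_i = 2a\sum_{i:W_i=0} d_i$, not $a\sum_{i:W_i=0} d_i$, though your final collected expression already carries the correct factor of $2$.
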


\begin{proof}
  Define $s_a^2 \equiv {s_1^*}^2 ={s_0^*}^2$ to be the common within group variance under $H_0: {\bf Y}(1) = {\bf Y}(0) +a$, based on the observed data.  Because reallocation mimics random allocation, \eqref{additive} applies, hence
  \begin{equation}
 \var \left( \hat{\tau} \right) \rightarrow s_a^2 \left( \frac{1}{n_1} + \frac{1}{n_0} \right).
 \end{equation}
Under $ H_0: {\bf Y}(1) = {\bf Y}(0) +a$,
\begin{align*}
Y_i(1) &= Y_i^{obs}W_i + (Y_i^{obs} + a)(1-W_i), \text{and}
\\ 
Y_i(0) &= (Y_i^{obs}-a)W_i + Y_i^{obs}(1-W_i).
\end{align*}
WLOG, assume just for the following derivation that the units are ordered such that $W_i =  1$ for $i = 1, \dots, n_1$ and $W_i = 0$ for $i = n_1+1, \dots, n$.  Then
\begin{align}
s_a^2 &= \frac{ \sum_{i=1}^n \left(Y_i(1) - \overline{Y}(1) \right)^2}{n - 1} \notag \\
&= \frac{ \sum_{i=1}^n \left(Y_i(1) - \left( \overline{Y}^{obs} + \frac{n_0 a}{n} \right)  \right)^2}{n - 1} \notag \\
&= \frac{1}{n-1} \left[ \sum_{i=1}^{n_1} \left(Y_i^{obs} -  \overline{Y}^{obs} - \frac{n_0 a}{n}  \right)^2 +  \sum_{i=n_1+1}^{n} \left(Y_i^{obs} + a -  \overline{Y}^{obs} - \frac{n_0 a}{n}  \right)^2 \right] \notag \\
&= s^2  + \frac{a}{n (n-1)} \left[ \frac{(n_1 n_0^2 +n_0 n_1^2) a}{n} - 2n_0\sum_{i=1}^{n_1} (Y_i^{obs} -  \overline{Y}^{obs} ) + 2n_1 \sum_{i=n_1+1}^{n} (Y_i^{obs} -  \overline{Y}^{obs} ) \right]\notag \\
&= s^2  + \frac{n_0 n_1 a}{n (n-1)} \left(a - 2\hat{\tau} \right). \notag
\end{align}
\end{proof}

We make a few relevant observations regarding \eqref{ravar}.  Note that $(n_0 n_1)/(n(n-1))$ does not go zero as the sample size increases, hence $\var(\hat{\tau})$ always depends on the ratio of the group sizes, $\hat{\tau}$, and the hypothesized additive effect, $a$.   The variance of $\hat{\tau}$ is minimized when $a = \hat{\tau}$, and only equals the variance under the sharp null hypothesis of no treatment effect, \eqref{reallocatenullvar}, when $a = 0$ or $a = 2 \hat{\tau}$; $\var \left(\hat{\tau} \right)$ is lower than under the sharp null hypothesis for $0 < a < 2\hat{\tau}$, and higher otherwise.  Figure~\ref{additivevar} shows $\var \left(\hat{\tau} \right)$ as a function of $a$. 

\begin{figure}[!ht]
\begin{center}
\includegraphics[width=3.5in]{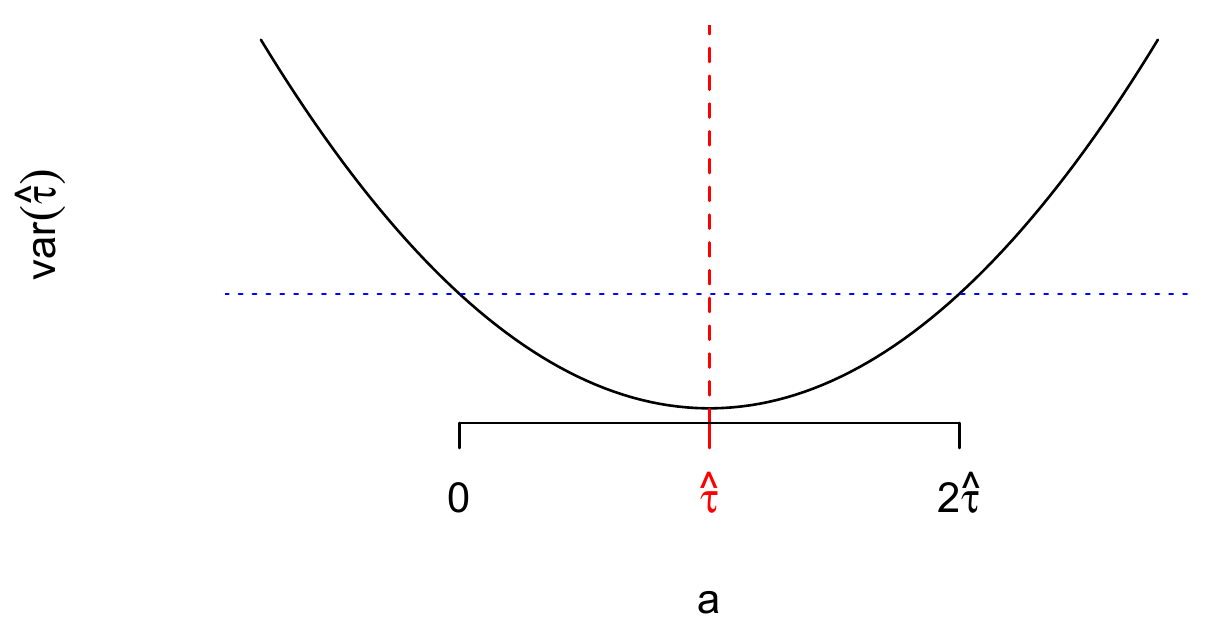}
\caption{The variance of $\hat{\tau}$ for reallocating under ${\bf Y}(1) = {\bf Y}(0)+a$, as a function of the assumed additive effect $a$.  The blue dotted line is $s^2 (1/n_1 + 1/n_0)$, or $\var(\hat{\tau})$ under the sharp null, when $a=0$.  The scales of the axes are not shown, as they depend on the data.  }
\label{additivevar}
\end{center}
\end{figure}

As an alternative to inverting a series of tests, some use the standard error from reallocating under the sharp null of $H_0: {\bf Y}(1) = {\bf Y}(0)$ to create the margin of error around the sample estimate.  However, this will generally not give the correct coverage, because $\var \left(\hat{\tau} \right)$ depends on $a$, and choosing to use the variance when $a = 0$ is arbitrary and generally incorrect, even for large samples.  If reallocation is to be used for estimation, the method of inverting many tests should be used, not the variance from one randomization distribution.

Without assuming additivity, estimation via reallocating becomes much more difficult \citep{kempthorne55}, and as far as we know, there is a not a good solution.  Reallocation should not be used for estimation when effects are suspected to vary substantially between units, or equivalently when within group variances differ substantially between groups.

\subsection{Random Sampling without a Null Hypothesis}

\begin{prop}
 Under random sampling of two groups of fixed sample sizes $n_0$ and $n_1$, 
 \begin{equation}\label{randsamplingvar}
 \var \left( \hat{\tau} \right) =  \frac{\sigma_1^2}{n_1} + \frac{\sigma_0^2}{n_0},
  \end{equation}
 where $\sigma_w^2$ is the population variance within group $w$.
 \end{prop}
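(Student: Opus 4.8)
The plan is to recognize that this proposition is simply the general form of the variance formula \eqref{rsvar}, already invoked in the random-sampling/sharp-null proposition of Section~\ref{rstest}, but now stated without imposing the null hypothesis, so that the two population variances $\sigma_0^2$ and $\sigma_1^2$ need not be equal. I would therefore derive \eqref{rsvar} directly from the independence structure of stratified random sampling rather than appeal to the earlier specialization.

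First I would decompose $\hat{\tau} = \overline{Y}^{obs}(1) - \overline{Y}^{obs}(0)$ and treat each group mean separately. Because $\overline{Y}^{obs}(w)$ is an average of $n_w$ draws that are independent and identically distributed as $Y(w) \sim [\mu_w, \sigma_w^2]$, the usual variance-of-a-sample-mean calculation gives $\var(\overline{Y}^{obs}(w)) = \sigma_w^2/n_w$ for each $w \in \{0,1\}$. Next I would use the fact that the two group samples are drawn independently, so that $\overline{Y}^{obs}(1)$ and $\overline{Y}^{obs}(0)$ are independent and hence $\cov(\overline{Y}^{obs}(1), \overline{Y}^{obs}(0)) = 0$. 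Combining via $\var(\hat{\tau}) = \var(\overline{Y}^{obs}(1)) + \var(\overline{Y}^{obs}(0)) - 2\cov(\overline{Y}^{obs}(1), \overline{Y}^{obs}(0))$ then yields \eqref{randsamplingvar} immediately.

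There is no real obstacle here; this is the most elementary of the four variance derivations. The only point worth flagging is the contrast with the random-allocation case: here the zero covariance comes from genuine independence of the two samples, whereas under random allocation the two group means are perfectly negatively correlated (yielding equation \eqref{rameanvar}) because they are formed from a single fixed pool partitioned without replacement. That contrast—independence versus the sampling-without-replacement dependence—is exactly what distinguishes \eqref{randsamplingvar} from its random-allocation analog, and it is worth making explicit even though the computation itself is routine.
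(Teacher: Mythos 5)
Your proof is correct and takes essentially the same route as the paper, which simply points back to the familiar formula \eqref{rsvar} derived from fixed sample sizes, i.i.d.\ draws within each group, and independence between the two groups. The only quibble is a labeling slip in your closing aside: the perfect negative correlation of the group means under random allocation is not what equation \eqref{rameanvar} states (that equation gives the variance of a single group mean under the sharp null), but this does not affect the argument.
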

 
 This is a commonly known result, and was also derived in \eqref{rsvar}.

\subsection{Resampling for Interval Estimation}\label{resampleint}

The resampling method we consider for estimation involves taking two independent samples with replacement: one of size $n_1$ taken from units with $W_i = 1$, and the other of size $n_0$ taken from units with $W_i = 0$.  This process is repeated many times, and $\hat{\tau}$ calculated for each.  There are many ways to use this to generate a confidence interval, such as using the estimated standard error and a $t$-value to create the margin of error or by generating an interval via percentiles \citep{diciccio88, diciccio96, hesterberg15}.  For comparison purposes, here we again focus only on $\var(\hat{\tau})$.

 \begin{prop}
 Under resampling of two groups of fixed sample sizes $n_0$ and $n_1$, as the number of simulations increases, 
\begin{equation}\label{resvar}
\var \left( \hat{\tau} \right) \rightarrow  \frac{s_1^2}{n_1}\left(\frac{n_1 - 1}{n_1}\right) + \frac{s_0^2}{n_0}  \left(\frac{n_0 - 1}{n_0}\right),
\end{equation}
 where $s_w^2 \equiv \frac{1}{n_w - 1} \sum_{W_i = w}  \left( Y_i^{obs} - \overline{Y}^{obs}(w) \right)^2$ is the sample variance within group $w$.   
\end{prop}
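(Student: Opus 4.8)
The plan is to compute the resampling (bootstrap) variance of $\hat{\tau}$ conditional on the observed data, treating ${\bf Y}^{obs}$ as fixed and regarding the only randomness as arising from the two independent with-replacement draws. The phrase ``as the number of simulations increases'' is then handled by the Monte Carlo law of large numbers: the empirical variance of the simulated $\hat{\tau}$ values converges to this conditional variance. This mirrors the structure of the sharp-null resampling proof behind \eqref{resnullvar}, with the crucial difference that here each group is resampled separately from its own $n_w$ units rather than from the pooled sample of all $n$ units.

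First I would treat a single group in isolation. Fix $w$ and let $X^*$ denote one draw taken with replacement from the $n_w$ observed values $\{Y_i^{obs} : W_i = w\}$. Since each value is selected with probability $1/n_w$, the draw has mean $\overline{Y}^{obs}(w)$ and
\[
\var(X^*) = \frac{1}{n_w} \sum_{W_i = w} \left( Y_i^{obs} - \overline{Y}^{obs}(w) \right)^2 = \frac{n_w - 1}{n_w}\, s_w^2 ,
\]
using the definition of $s_w^2$ with its $n_w - 1$ denominator. The resampled group mean is the average of $n_w$ such draws that are i.i.d.\ conditional on the data, so its variance is $1/n_w$ times the single-draw variance, giving $\var(\overline{Y}^{obs}(w)) \to \frac{s_w^2}{n_w}\left( \frac{n_w - 1}{n_w}\right)$.

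Next I would combine the two groups. Because the size-$n_1$ and size-$n_0$ resamples are drawn independently from disjoint sets of units, the two bootstrap group means are independent, so their covariance vanishes and $\var(\hat{\tau})$ is the sum of the two group-mean variances. Adding the two expressions from the previous step yields the claimed formula.

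The computation is essentially routine, so I would not expect a genuine obstacle; the only points requiring care are bookkeeping. First, the factor $(n_w - 1)/n_w$ must be tracked carefully: it appears precisely because $s_w^2$ is defined with an $n_w - 1$ denominator while the with-replacement draw effectively divides by $n_w$. Second, I would be explicit that all variances are taken conditional on ${\bf Y}^{obs}$, so that the independence of the two draws—and hence the additivity of the variances—is exact rather than merely asymptotic. It is worth contrasting this with \eqref{resnullvar}, where under the sharp null one resamples from the pooled sample of all $n$ units, producing the grand-sample $s^2$ and the factor $(n-1)/n$; dropping the null here means resampling within groups, which replaces $s^2$ by the within-group $s_w^2$ and $(n-1)/n$ by $(n_w - 1)/n_w$.
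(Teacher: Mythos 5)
Your proposal is correct and follows essentially the same route as the paper: the paper simply cites the within-group analogue of \eqref{bootmeanvar} (the standard bootstrap variance of a sample mean, $\frac{s_w^2}{n_w}\cdot\frac{n_w-1}{n_w}$) together with independence of the two resamples, whereas you derive that per-group result explicitly from the variance of a single with-replacement draw. The extra care you take about conditioning on ${\bf Y}^{obs}$ and about the source of the $(n_w-1)/n_w$ factor is sound and consistent with the paper's argument.
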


\begin{proof}
The result follows immediately by the analogous version of \eqref{bootmeanvar} and independence. \end{proof}

Again this is slightly biased below the true variance, \eqref{randsamplingvar}, but a consistent estimate.

\section{Testing a Null of Equal Means}\label{equalmeans}

Section~\ref{sharpnull} considers the sharp null hypothesis of no difference, but often we postulate a weaker null hypothesis, specifying only equality in the means: $H_0: \mu_0 = \mu_1$ for the population framework and $H_0: \overline{Y}(1) = \overline{Y}(0)$ for the finite sample framework.    How does this change the resulting standard errors? 

The true variances of $\hat{\tau}$ under this null, for both random allocation and random sampling, are equivalent to those derived for estimation, \eqref{ravar} and \eqref{randsamplingvar}, because both derivations are based on the variability of the data within groups (and across individual differences in potential outcomes for random allocation), and these do not change under equal means.  

As discussed in Section~\ref{reallocateint}, the process of reallocating requires a hypothesis that implies the values for all potential outcomes from the observed data.  Although this could be done without assuming additivity (by postulating a null difference for $Y_i(1) - Y_i(0)$ that is not constant across $i$), this is rarely, if ever, done in practice.  Therefore, to avoid the assumption of equal variances implicit with additivity, reallocating is essentially not a viable option.    

Philip Good has been reported as distinguishing testing via resampling and reallocating by the hypotheses themselves, noting that reallocating tests ``hypotheses concerning distributions," while resampling tests ``hypotheses concerning parameters" \citep[p. 7]{berry14}.  While reallocating is limited to testing the sharp null hypothesis, resampling offers the flexibility of either null hypothesis. Under the sharp null of no difference, units can be sampled from the combined sample, as discussed in Section~\ref{resampletest}.  Under the null of equal means, one can shift the groups to have equal means, creating new data, ${\bf Y}^*$, such that 
\begin{align}
Y_i^* \equiv \begin{cases}
Y_i^{obs} - \hat{\tau}/2 + b \text{ if } W_i = 1, \\
Y_i^{obs} + \hat{\tau}/2 + b \text{ if } W_i = 0,  
\end{cases}
\end{align} 
where $b$ can be any value.  Common choices for $b$ include $\hat{\tau}/2$, $-\hat{\tau}/2$, or $ 0$, but the value of $b$ is actually irrelevant for a difference in means because it will cancel out.  Resampling then occurs by resampling $n_1$ units from units with $W_i = 1$ and $n_0$ units from units with $W_i = 0$, and using the shifted values, $Y_i^*$ to calculate $\hat{\tau}$.  Because resampled units always come from their original group, all distributional differences between the groups besides center are preserved.  This is equivalent to resampling from the original values within each group as described in Section~\ref{resampleint} and then shifting the bootstrap distribution to be centered at 0, so the resulting variance is equivalent to that given in \eqref{resvar}.  


The specification of the null hypothesis makes much more of a difference than the choice to resample or reallocate, and the two decisions should not be conflated.  The resampling variances under a sharp null, \eqref{resnullvar}, and this weaker null, \eqref{resvar}, can differ substantially, while under a sharp null hypothesis it doesn't make much difference whether you reallocate, \eqref{reallocatenullvar}, or resample, \eqref{resnullvar}, at least in large samples.

\section{Two Examples}\label{examples}

We illustrate with two examples.  The first is based on data from a randomized experiment with relatively small sample sizes, approximately equal standard deviations between the groups, and relatively symmetric distributions.  The second is based on data from a random sample with larger sample sizes, standard deviations differing substantially between the groups, and positively skewed distributions with high outliers.  In each case we simulate each of the simulation procedures, using 20 million simulations to make the Monte Carlo variation negligible; in all cases the empirical standard errors are identical to the theoretical values as derived in Sections~\ref{sharpnull} to \ref{equalmeans}.  We also calculate empirical p-values or confidence intervals, which are all stable up to at least three decimal places.  

\subsection{Sleep versus Caffeine: A Randomized Experiment}\label{sc}

Here we analyze data from a randomized experiment comparing the effect of sleep versus caffeine on memory \citep{mednick08}.  People were shown a list of words to memorize, then randomly divided into two equally sized groups, $n_1 = n_0 = 12$.  One group ($W_i = 1$) took a nap for 90 minutes, while the other group ($W_i = 0$) took a caffeine pill and stayed awake.  The outcome, ${\bf Y}^{obs}$, is the number of words recalled after the nap/caffeine break.  The dataset, {\em SleepCaffeine}, is available at \url{www.lock5stat.com/datapage.html}.  The data are shown in Figure~\ref{scData}, and the sample difference in means is
\begin{equation}
\hat{\tau} = \overline{Y}^{obs}(1) - \overline{Y}^{obs}(0) = 15.25 - 12.25 = 3.
\end{equation} 

\begin{figure}[!ht]
\begin{center}
\includegraphics[width=.6\textwidth]{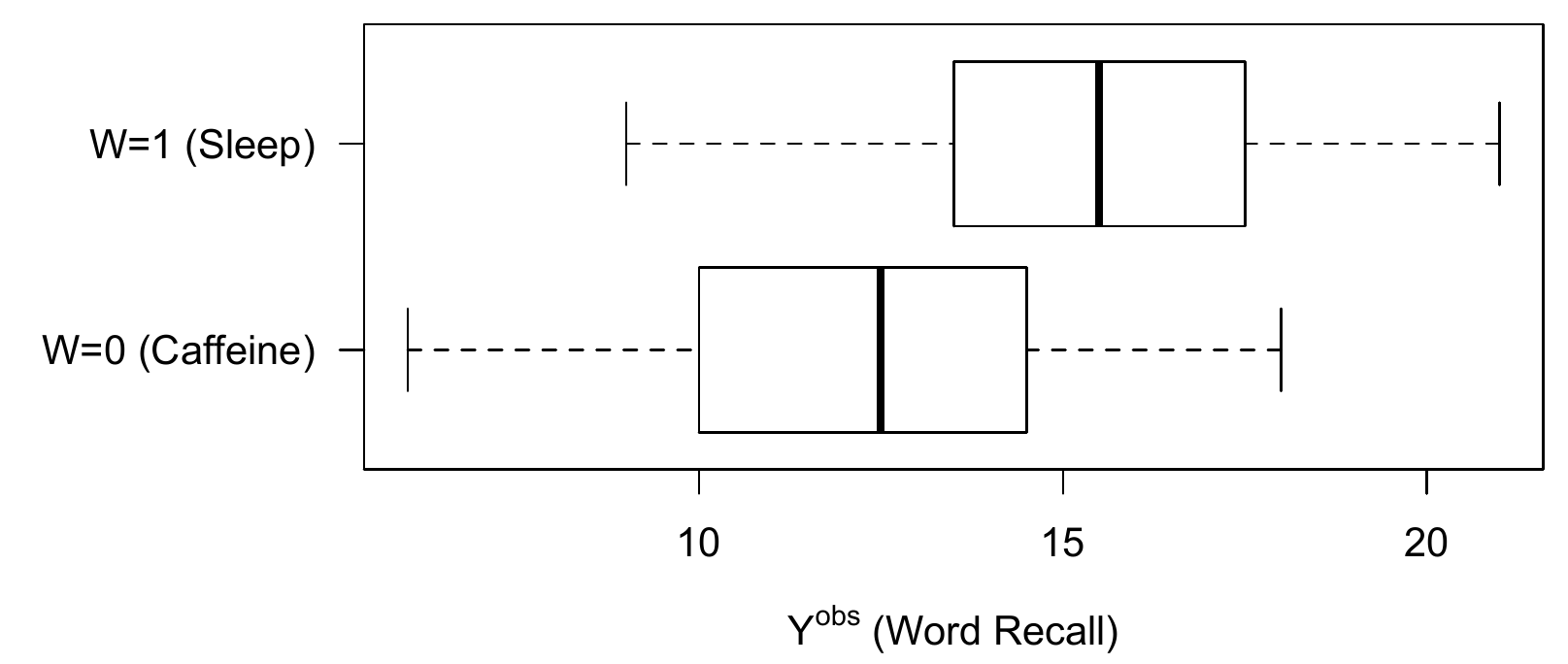}
\caption{Data from the sleep versus caffeine randomized experiment on memory.}
\label{scData}
\end{center}
\end{figure}

The overall sample standard deviation is $s = 3.686$, so the true standard error under random allocation and a sharp null hypothesis can be calculated by \eqref{ranullvar}:
\begin{equation*}
\left[ s^2 \left( \frac{1}{n_1} + \frac{1}{n_0} \right)\right]^{\frac{1}{2}} = \left[ 3.686^2 \left( \frac{1}{12} + \frac{1}{12} \right) \right]^{\frac{1}{2}} =1.505.
\end{equation*}
By \eqref{reallocatenullvar}, this is also the standard error for reallocating under the sharp null.  The standard error for resampling under the sharp null (from a combined sample), by \eqref{resnullvar}, is
\begin{equation*}
\left[ s^2 \left( \frac{1}{n_1} + \frac{1}{n_0} \right)\left( \frac{n-1}{n} \right) \right]^{\frac{1}{2}}  = \left[  3.686^2 \left( \frac{1}{12} + \frac{1}{12} \right)\left( \frac{24-1}{24} \right) \right]^{\frac{1}{2}} = 1.473.
\end{equation*}
By \eqref{resvar}, the standard error for resampling for intervals or a null of equal means is
\begin{equation*}
 \left[  \frac{s_1^2}{n_1}\left(\frac{n_1 - 1}{n_1}\right) + \frac{s_0^2}{n_0}  \left(\frac{n_0 - 1}{n_0}\right) \right]^{\frac{1}{2}} =\left[   \frac{3.306^2}{12}\left(\frac{12 - 1}{12}\right) + \frac{3.545^2}{12} \left(\frac{12 - 1}{12}\right) \right]^{\frac{1}{2}} = 1.340.
\end{equation*}
The standard errors for resampling are noticeablely smaller because of the small samples.  

One-sided p-values are calculated as the proportion of simulated samples yielding statistics greater than or equal to $\hat{\tau}$.  We also compute a 95\% confidence interval via reallocating (inverting tests), and resampling using $\hat{\tau} \pm t^*  SE$:
\begin{equation}
\hat{\tau} \pm t^* SE = 3 \pm 2.2(1.34) = (0.05, 5.95).
\end{equation}
Although not recommended, for comparison we also compute the interval using the standard error from the reallocating sharp null distribution.  Results are shown in Table~\ref{sleepResults}.

\begin{table}[!ht]
\begin{center}
\begin{tabular}{|c|l|c|c|c|c|}
\hline
&& SE & p-value or 95\% CI  \\
 \hline
\multirow{4}{*}{Test} & {\em Truth}: Random allocation (sharp null) & 1.505 & -  \\ \cline{2-4}
& Reallocating (sharp null) & 1.505 & 0.025 \\ \cline{2-4}
&Resampling (sharp null) & 1.473 & 0.022 \\ \cline{2-4}
& Resampling (equal means) & 1.340 & 0.013 \\ \hline
\multirow{3}{*}{Interval} & Reallocating (inverting test) & - & (0.00, 6.00) \\ \cline{2-4}
& Reallocate (sharp null SE) & 1.505 & (-0.31, 6.31) \\ \cline{2-4}
& Resampling & 1.340 & (0.05, 5.95) \\ \hline
\end{tabular}
\end{center}
\caption{Standard errors, p-values, and 95\% confidence intervals for each of the different simulation methods applied to the sleep versus caffeine example.}
\label{sleepResults}
\end{table}

\subsection{Income by Sex: A Random Sample} \label{acs}

Here we analyze data obtained via random sampling, looking at how income of employed American's differs by sex using a random subsample of employed adults from the American Community Survey \citep{acs10}.    Let $W_i = 1$ for males and $W_i = 0$ for females; $n_1 = 214$ and $n_0 = 217$. Let $Y^{obs}$ denote annual income in thousands of dollars.  The dataset, {\em ACS}, is available at \url{www.lock5stat.com/datapage.html}.  The data are displayed in Figure~\ref{acsData}, and the observed difference in means is
\begin{equation}
\hat{\tau} = \overline{Y}^{obs}(1) - \overline{Y}^{obs}(0) = 50.96 - 32.16 = 18.8.
\end{equation} 

\begin{figure}[!ht]
\begin{center}
\includegraphics[width=.6\textwidth]{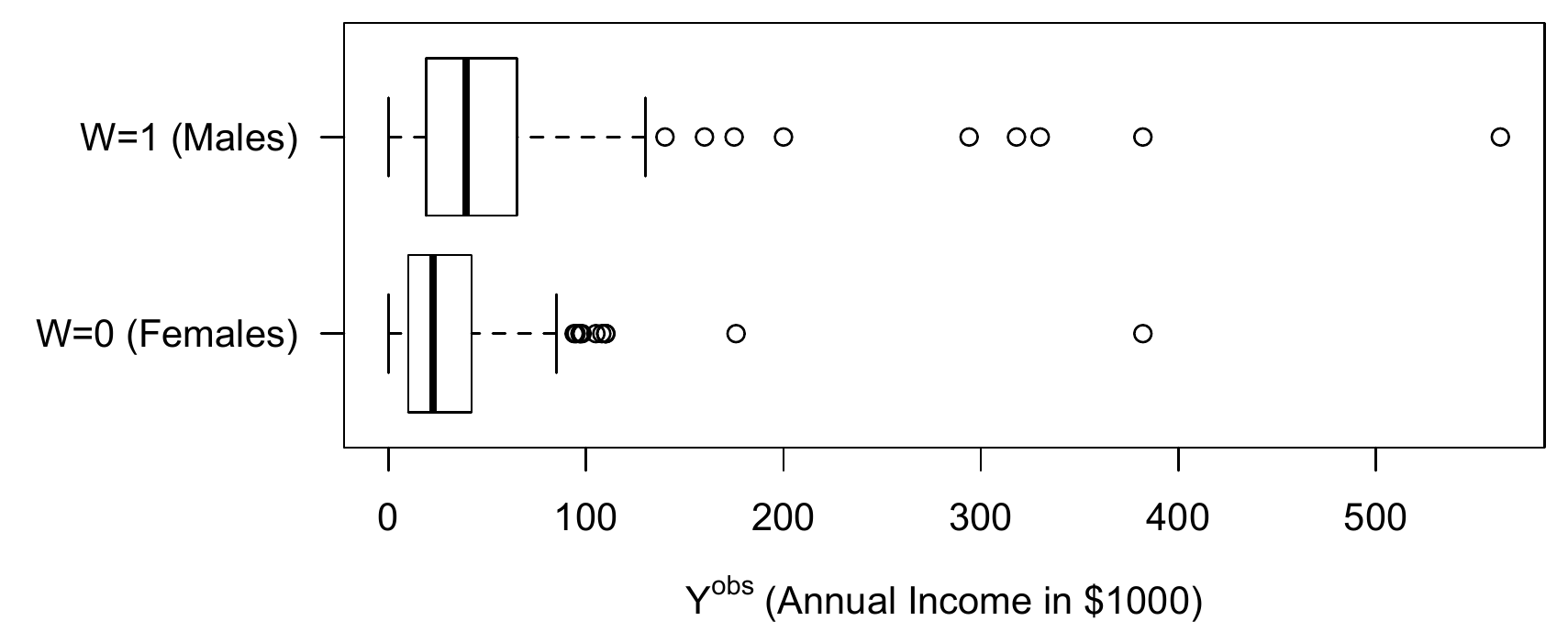}
\caption{Data from the American Community Survey on annual income by sex.}
\label{acsData}
\end{center}
\end{figure}

The theoretical standard errors are
\begin{equation*}
\left[ s^2 \left( \frac{1}{n_1} + \frac{1}{n_0} \right) \right]^{\frac{1}{2}} = \left[ 52.248^2 \left( \frac{1}{214} + \frac{1}{217} \right) \right]^{\frac{1}{2}} = 5.034
\end{equation*}
under reallocation with a sharp null,  
\begin{equation*}
\left[ s^2 \left( \frac{1}{n_1} + \frac{1}{n_0} \right)\left( \frac{n-1}{n} \right)\right]^{\frac{1}{2}} = \left[ 52.248^2 \left( \frac{1}{214} + \frac{1}{217} \right)\left( \frac{431-1}{431} \right)\right]^{\frac{1}{2}} = 5.028
\end{equation*}
under resampling from a combined population (sharp null), and
\begin{equation*}
\left[ \frac{s_1^2}{n_1}\left(\frac{n_1 - 1}{n_1}\right) + \frac{s_0^2}{n_0}  \left(\frac{n_0 - 1}{n_0}\right) \right]^{\frac{1}{2}}  =  \left[ \frac{62.848^2}{214}\left(\frac{214 - 1}{214}\right) + \frac{36.920^2}{217}  \left(\frac{217 - 1}{217}\right) \right]^{\frac{1}{2}} = 4.962
\end{equation*}
under resampling within (possibly shifted) groups.   Here the sample sizes are larger than in the previous example, so reallocating and resampling differ less under the sharp null.  However, there is more of a difference in the standard deviation between the two groups, thus resampling within groups differs from the sharp null standard errors.  

As with the previous example, we calculated p-values and confidence intervals, with results given in Table~\ref{acsResults}. These results are all highly significant, and we have strong evidence that among employed American adults, males make substantially more than females.    

\begin{table}[!ht]
\begin{center}
\begin{tabular}{|c|l|c|c|}
\hline
&& SE & p-value or 95\% CI  \\
 \hline
\multirow{3}{*}{Test} &  Reallocating (sharp null) & 5.034 & $2 \times 10^{-5}$ \\ \cline{2-4}
&Resampling (sharp null) & 5.028 & $1.5 \times 10^{-4}$ \\ \cline{2-4}
& Resampling (equal means) & 4.962 & $2.6 \times 10^{-4}$ \\ \hline
\multirow{3}{*}{Interval} & Reallocating (inverting test) & - & (9.13, 28.46) \\ \cline{2-4}
& Reallocate (sharp null SE) & 5.034 & (8.89, 28.72) \\ \cline{2-4}
& Resampling & 4.962 & (9.03, 28.58) \\ \hline
\end{tabular}
\end{center}
\caption{Standard errors, p-values, and 95\% confidence intervals for each of the different simulation methods applied to the income and sex example.}
\label{acsResults}
\end{table}

The testing results across methods are counterintuitive; the standard errors and p-values are inversely correlated!  This is because the simulated distributions are not entirely normal; as seen in Figure~\ref{acsData}, income is highly right-skewed.  In either simulation under the sharp null, the actual sex of participants (${\bf W}$) is irrelevant, and because $n_1 \approx n_0$, the two groups sampled are then indistinguishable, guaranteeing symmetry.  On the other hand, this does not hold when resampling within groups; because the males are more highly skewed with more high outliers, resampling within group yields a positively skewed distribution.  This means there will be more high values in the right tail, explaining how this distribution yields higher p-values, despite the lower standard error.  Here the Pearson Moment Coefficient of Skewness (estimated as $m_3/s^3$, where $m_3$ is the third central moment) is about 0.15. 

Although the standard errors under the sharp null appear almost identical, the p-values differ by an order of magnitude, again cautioning against the sole use of standard errors to judge comparability. To get at this difference, we have to go one moment further and examine kurtosis.  Based on our simulations, the sample excess kurtosis (estimated as $m_4/s^4 - 3$, where $m_4$ is the fourth central moment) is about -0.17 for reallocating and 0.08 for resampling (both sampling from the combined sample and within groups).  Because the distributions based on resampling have fatter tails, in part because resampling allows outliers to be sampled multiple times while reallocating does not, they can yield larger p-values despite smaller standard errors.  These issues were not relevant to the sleep versus caffeine example, despite the smaller sample sizes, because the original data is relatively symmetric and free of outliers (see Figure~\ref{scData}).  Further work comparing higher moments for reallocating and resampling would be worthwhile. 

\section{Summary}\label{summary}

Table~\ref{vartauhat} summarizes $\var(\hat{\tau})$ for each of the data collection methods (random allocation and random sampling) and simulation methods (reallocation and resampling), for testing a sharp null and generating confidence intervals.  The results under a null of equal means parallel those for intervals, except for reallocating.  

\begin{table}[!ht]
\begin{center}
\begin{tabular}{|l|l|l|}
\hline
Method & Testing: Sharp Null & Intervals  \\
\hline
Random Allocation&  $\displaystyle s^2 \left( \frac{1}{n_1} + \frac{1}{n_0}\right) $ & $\displaystyle  \frac{{s^*_1}^2}{n_1} + \frac{{s^*_0}^2}{n_0} - \frac{{s^*_{1-0}}^2}{n}$ \\
\hline
Reallocating & $\displaystyle s^2  \left( \frac{1}{n_1} + \frac{1}{n_0} \right) $ & $\displaystyle \left( s^2  + \frac{n_0 n_1 a}{n (n-1)} \left(a - 2\hat{\tau} \right) \right) \left( \frac{1}{n_1} + \frac{1}{n_0} \right)$ for each $a$ \\
\hline
Random Sampling &  $\displaystyle \sigma^2 \left( \frac{1}{n_1} + \frac{1}{n_0}\right)$ & $\displaystyle \frac{\sigma_1^2}{n_1} + \frac{\sigma_0^2}{n_0}$ \\ 
\hline
Resampling & $\displaystyle s^2 \left( \frac{1}{n_1} + \frac{1}{n_0} \right) \left( \frac{n-1}{n} \right)$ &  $\displaystyle  \frac{s_1^2}{n_1}\left(\frac{n_1 - 1}{n_1}\right) + \frac{s_0^2}{n_0}  \left(\frac{n_0 - 1}{n_0}\right)$ \\
\hline
\end{tabular}
\caption{Theoretical variance of $\hat{\tau}$ under different data collection and simulation methods.  Here $s^2$ and $s_w^2$ are known sample variances, $\sigma^2$ and $\sigma_w^2$ are unknown population variances, and ${s_w^*}^2$ are unknown sample variances calculated across all units.}
\label{vartauhat}
\end{center}
\end{table}


When testing a sharp null, all methods yield very similar variances.  The variance of $\hat{\tau}$ from random sampling differs from that of random allocation only in that it uses $\sigma^2$ rather than $s^2$.  With enough simulations, reallocation converges to the exact null distribution that would be observed under random allocation, providing a gold standard test.  The variance of $\hat{\tau}$ under resampling only differs from reallocating by a factor of $(n-1)/n$, which is negligible for large $n$, but induces bias for small $n$.   The variance from reallocating gives an unbiased estimate of the true variance of $\hat{\tau}$, regardless of whether the data were collected using random allocation or random sampling, while the variance from resampling gives a consistent estimate of the true variance, but biased slightly too low.   In short, when testing a sharp null hypothesis reallocation is slightly superior to resampling, regardless of the method of data collection, although they are asymptotically equivalent.  

When generating a confidence interval, the differences in $\var(\hat{\tau})$ across both data collection methods and simulation methods are more substantial.  The true variance of $\hat{\tau}$ under random allocation involves the variance of the unit-level treatment effects, and thus is impossible to estimate unbiasedly using only the observed data.  It will in general be lower than $\var(\hat{\tau})$ under random sampling.  Reallocating requires an assumed relationship between ${\bf Y}(1)$ and ${\bf Y}(0)$ in order to fill in all the missing potential outcomes from ${\bf Y}^{obs}$, which usually takes the form of assuming additivity, ${\bf Y}(1) = {\bf Y}(0) + a$. Inverting many randomization tests for different values of $a$ can yield a valid confidence interval, but only under the assumption of additivity, or equivalently equal variances between treatment groups.  Using the standard error from the sharp null randomization distribution for estimation is not recommended, as the standard error depends on the value of the hypothesized additive effect, $a$, and this approach arbitrarily sets $a=0$.  The variance of $\hat{\tau}$ from resampling is a consistent estimate for the true variance under random sampling, and a conservative estimate of the true variance under random allocation, which is the best that can be done without additional assumptions.  For intervals, resampling is superior to reallocation, regardless of the method of data collection, unless the treatment effect is truly additive.

These results are counterintuitive, and go contrary to popular belief.  Intuition would lead one to believe that the randomization in the simulation method should mimic the randomization in the data collective method (if it matters at all), but this is not the case.  For a difference in means, with either complete randomization to two groups or simple random sampling of two groups, we find that reallocating is superior for testing a sharp null hypothesis with small $n$ (while for large $n$ it doesn't matter which method is used), that resampling is superior for intervals unless additivity holds, and that these findings apply {\em regardless of how the data were collected}.

Of course, for more complex study designs, it becomes more important that the simulation method mimic the data collection method, and for more complex designs reallocating will likely be more natural if random allocation was used and resampling more natural if random sampling was used.  Likewise, it is always important that randomness be present in some form, and hence the data collection method will always be relevant for inference.  Moreover, our results are based solely on $\var(\hat{\tau})$, but, as evidenced by Section~\ref{acs}, further moments should be considered if $\hat{\tau}$ is not normally distributed.  However, for the cases examined here, matching the randomness in simulation to the randomness in data collection need not be a priority, at least not for mathematical reasons.

\bibliographystyle{agsm.bst}

\bibliography{simulationBib}
\end{document}